\def\be#1\ee{\begin{equation}#1\end{equation}}
\newtheorem{thm}{Theorem}[section]
\renewcommand{\Im}{\operatorname{Im}}
\def\P{{\mathbb{P}}}
\def\R{\mathbb{R}}
\def\E{\mathbb{E}\,}
\def\Z{{\mathbb Z}}
\newenvironment{proof}[1][] {\noindent {\bf Proof#1:} }{\hspace*{\fill}$\square$\medskip\par}
\def\ed#1{ {\mathbf 1}_{ \{#1  \}}}             
\def\al{{\alpha}}
\def\b{{\beta}}
\def\cov{\textrm{Cov}}
\def\EE{{\mathcal E}}
\def\errna{\textrm{ERR}_{\textrm{NA}}}
\def\errap{\textrm{ERR}_{\textrm{A}}^+}
\def\erra{\textrm{ERR}_{\textrm{A}}}
\def\hg{{\widehat{g}}}
\def\hw{{\widehat{w}}}
\def\hh{{\widehat{h}}}
\def\hG{{\widehat{G}_{\le 0}}}
\def\hgamma{{\widehat{\gamma}}}
\def\KK{{\mathcal K}}
\def\L{{\mathbb L}}
\def\PP{{\mathcal P}}
\def\T{{\mathbb T}}
\def\W{{\mathcal W}}
\def \=L{\ {\buildrel\hbox{\scriptsize d }\over =}\ }
\begin{document}

\title{\bf Adaptive Energy Saving Approximation for
Stationary Processes
\\ }
\author{
   Zakhar Kabluchko
   \footnote{M\"unster University, Orl\'eans-Ring 10, 48149 M\"unster, Germany,
   email \ {\tt  zakhar.kabluchko@uni-muenster.de}}
     \and
   Mikhail Lifshits
   \footnote{St.\ Petersburg State University, 7/9 Universitetskaya nab., St.\ Petersburg, 199034 Russia,
   email {\tt mikhail@lifshits.org} and MAI, Link\"oping University.}
}
\date{\today}

\maketitle

\begin{abstract}
We consider a stationary process
(with either discrete or continuous time) and find
an adaptive approximating stationary process combining approximation
quality and supplementary good properties that can be interpreted as additional smoothness
or small expense of energy.
The  problem is solved in terms of the spectral characteristics of the approximated
process by using classical analytic methods from prediction theory.

\end{abstract}
\vskip 1cm

\noindent
\textbf{2010 AMS Mathematics Subject Classification:}
Primary: 60G10;  Secondary: 60G15, 49J40, 41A00.
\bigskip

\noindent
\textbf{Key words and phrases:}
least energy approximation, prediction,
stationary process, stationary sequence.
\vfill

\newpage

\section{Main objects and problem setting}

Consider a random process  $B(t)$ with continuous ($t\in \R$) or discrete ($t\in \Z$)
time. We try to approximate $B$ with another process $X$ that, being close to $B$, would
have, appropriately understood, better sample path properties. For example, one may imagine
that a sample path of $B$ is a model of trajectory for some chaotically moving target
while the sample path of $X$ is a pursuit trajectory built upon observations of $B$.
In the most interesting cases (when the time is continuous), the trajectories of $B$ are non-differentiable, while the trajectories of $X$ are required to be smooth.

In this article we assume that $B$ and $X$ are
wide-sense stationary process. The additional requirements on $X$ are stated in terms
of small average expense of generalized energy, the latter notion being formalized below.

\subsection*{Continuous time, stationary process}

Let  $(B(t))_{t\in \R}$ be a centered, complex-valued,
wide-sense stationary process. The latter condition means that $\E |B(t)|^2 < \infty$
and the covariance function of $B$ depends only on the time difference, namely,
\[
  \cov( B(t_1),B(t_2))= \cov( B(t_1-t_2),B(0)):=K_B(t_1-t_2).
\]
As usual, we assume that $K_B(\cdot)$ is continuous.

We will look for an approximating process $(X(t))_{t\in \R}$ such that the pair
$(B,X)$ would be jointly wide-sense stationary, the processes be close to each other
but $X$ spends a small amount of energy (a notion to be specified soon)
in its approximation efforts. 

We call the instant {\it energy} of $X$ at time $t\in\R$ an expression
\[ 
    \EE[X](t):= \left| \sum_{m=0}^M \ell_m X^{(m)}(t)\right|^2,
\]  
where $X^{(m)}$ stands for the $m$-th derivative of $X$ and  $\ell_m$ are some fixed
complex
coefficients. The most natural type of energy is
the {\it kinetic energy} which is just $\al^2 |X^{(1)}(t)|^2$ with some $\al>0$.

The natural goal for us would be the minimization of the functional
\[
   \lim_{T\to\infty} \frac 1T\ \int_0^T \left[ |X(t)-B(t)|^2 + \EE[X](t)\right] dt
\]
combining approximation and energy properties with averaging in time. If, additionally,
the process $X(t)-B(t)$ and all derivatives $X^{(m)}(t)$ are stationary processes in the
strict sense, in many situations the ergodic theorem applies and the limit above is equal to
$\E |X(0)-B(0)|^2 + \E \EE[X](0)$.
In the wide-sense theory, we simplify our task to solving the problem
\be \label{EE0_XB}
  \E |X(0)-B(0)|^2 + \E \EE[X](0) \searrow \min
\ee
and setting aside ergodicity issues.
From the point of view of control theory, the term $\E \EE[X](0)$ may be considered as a
sort of penalty imposing certain smoothness on $X$.
\medskip

Notice that, once the problems of the form \eqref{EE0_XB} are solved, one can easily separate the two terms in
 \eqref{EE0_XB}, solving  (by Lagrange multipliers method) the somewhat more natural problems:

a) Find a process $X$ with minimal expense of energy and reaching prescribed closeness to $B$,
\[
    \E \EE[X](0) \searrow \min \qquad (\textrm{over } X \textrm{  such that }   \E |X(0)-B(0)|^2\le \delta )
\]
for any given $\delta>0$,

b) Find a process $X$ reaching the best possible closeness to $B$ using given amount of energy,
\[
     \E |X(0)-B(0)|^2 \searrow \min    \qquad (\textrm{over } X \textrm{  such that } \E \EE[X](0) \le \EE )
\]
for any given $\EE>0$.
\medskip

We will consider the problem \eqref{EE0_XB} either in the simpler {\it non-adaptive setting},
i.e.\ without any further restrictions on $X$, or in the {\it adaptive setting} by requiring additionally
\[
   X(t) \in \overline{{\rm span}}\{ B(\tau),\tau \le t \big| \L_2(\Omega,\P)  \},
   \quad t\in \R,
\]
where $\overline{{\rm span}}\{ A | H\}$ denotes the closed linear span of a subset $A\subset H$ in a Hilbert space $H$.
In other words, we only allow approximations based on the current and past values of $B$.
The non-adaptive setting was considered in \cite{IKL,KabLi}. In the present paper,
after briefly recalling the corresponding results,  we concentrate on the much more interesting
and difficult adaptive setting.

Our approach in solving  \eqref{EE0_XB} is based on spectral representations of
stationary processes. We will now briefly recall some facts from this theory; see, e.g.,\ \cite{AshG}.

According to the Bochner--Khinchine theorem, the covariance function $K_B(\cdot)$ admits a spectral representation
\[
  K_B(t) = \int_\R e^{itu}\mu(du),
\]
where $\mu$ is a finite measure called the spectral measure of $B$. Moreover, the process $B$ itself
admits a spectral representation
\[
  B(t) = \int_\R e^{itu} \W(du),
\]
where $\W(du)$ is a complex centered random measure with uncorrelated values  on $\R$
controlled by the spectral measure
$\mu$, i.e.\ $\mu(A)=\E |\W(A)|^2$ for any Borel set $A\subset\R$.

Without loss of generality we may restrict our optimization to the class of
approximating process having the form
\be \label{XW}
  X(t) = \int_\R \hg(u) e^{itu} \W(du),
\ee
where $\hg(\cdot)\in \L_2(\R,\mu)$ is an unknown function. For example, if
$X$ is a moving average process,
\be \label{XgB}
   X(t) = \int_\R g(\tau) B(t+\tau) d\tau
\ee
for some weight $g\in \L_1(\R)$, then we have
\begin{eqnarray*}
   X(t) &=& \int_\R g(\tau) \int_\R e^{i(t+\tau)u} \W(du)  d\tau
   \\
       &=& \int_\R  \hg(u)  e^{itu}\W(du),
\end{eqnarray*}
where
\[ 
    \hg(u) :=   \int_\R g(\tau) e^{i\tau u} d\tau
\]
is the inverse Fourier transform of $g$.

Indeed, it is easy to show that every process $X$ that is jointly wide-sense stationary with $B$
can be represented as a sum of two wide-sense stationary processes,
\[
  X(t)= X_1(t) +X_2(t),\qquad t\in \R,
\]
where $X_1$ is a process of the class \eqref{XW} and $X_2(t)$ is uncorrelated with $B(s)$
for all $s,t\in\R$.
It follows that
\[
    \E |X_1(0)-B(0)|^2 + \E \EE[X_1](0) \le  \E |X(0)-B(0)|^2 + \E \EE[X](0),
\]
and reduction to the class \eqref{XW} is justified.

Next, if a process  $X$ has the form \eqref{XW} and
\[
   \int_\R |\hg(u)|^2 u^{2m}  \mu(du)<\infty,
\]
for some positive integer $m$, then the $m$-th mean square derivative of $X$
exists and admits a representation
\[  
   X^{(m)}(t) =  \int_\R \hg(u) (iu)^m e^{itu} \W(du).
\]  
Hence,
\begin{eqnarray*}
   &&   \E |X(0)-B(0)|^2 + \E \EE[X](0)
   \\
   &=&
   \E \left| \int_\R (\hg(u)-1) \W(du) \right|^2  + \E  \left| \int_\R   \hg(u) \sum_{m=0}^M \ell_m (iu)^m   \W(du) \right|^2
   \\
   &=& \int_\R \left[ |\hg(u)-1|^2 + |\hg(u)|^2 |\ell(iu)|^2 \right] \mu(du)
\end{eqnarray*}
with energy polynomial
\[
  \ell(z):= \sum_{m=0}^M \ell_m z^{m}.
\]
Now our problem \eqref{EE0_XB} can be reformulated analytically as
\be \label{EE0}
    \int_\R \left[ |\hg(u)-1|^2 + |\hg(u)|^2|\ell(iu)|^2 \right] \mu(du) \searrow \min,
\ee

Notice that one can consider this problem with more or less arbitrary  function $\ell(\cdot)$
instead of a polynomial.

The spectral condition  equivalent to adaptive setting is
\[
   \hg(u)e^{itu}  \in \overline{{\rm span}}\{ e^{i \tau u},\tau \le t  \big|   \L_2(\R,\mu) \},
   \quad t\in \R.
\]
This condition clearly holds for all $t\in \R$ iff it holds for $t=0$, i.e.\
\[
     \hg  \in \hG:= \overline{{\rm span}}\{ e^{i \tau u},\tau \le 0 \big|  \L_2(\R,\mu) \}.
\]

\subsection*{Discrete time, stationary sequence}

Let  $(B(t))_{t\in\Z}$ be a centered wide-sense stationary sequence, which means that $\E |B(t)|^2 < \infty$ and  its
covariance function depends only on the time difference:
\[
  \cov( B(t_1),B(t_2))= \cov( B(t_1-t_2),B(0)):=K_B(t_1-t_2).
\]
According to the Herglotz theorem, $K_B(\cdot)$ admits a spectral representation
\[
  K_B(t) = \int_\T e^{itu}\mu(du), \qquad t\in \Z,
\]
where $\mu$ is a finite measure on $\T:=[-\pi,\pi)$ called the spectral measure of $B$.
The sequence $B$ itself admits a spectral representation
\[
  B(t) = \int_\T e^{itu} \W(du), \qquad t\in \Z,
\]
where $\W(du)$ is a complex centered random measure with uncorrelated values
on $\T$ controlled by $\mu$.

As in \eqref{XW}, we search an approximating sequence $(X(t))_{t\in \Z}$, in the form
\[
  X(t) = \int_\T \hg(u) e^{itu} \W(du),
\]
where $\hg(\cdot)\in \L_2(\T,\mu)$. For example, if $X$ is a moving average
sequence,
\[ 
   X(t) = \sum_{\tau\in \Z} g(\tau) B(t+\tau)
\] 
for some summable weight $g$, then we have
\begin{eqnarray*}
   X(t) = \int_\T  \hg(u)  e^{itu}\W(du), \qquad t\in\Z,
\end{eqnarray*}
where
\[ 
    \hg(u) :=   \sum_{\tau\in\Z} g(\tau) e^{i\tau u}
\] 
is the inverse Fourier transform of $g$.

In the discrete case the notion of energy should be modified by replacing the (right)
derivatives with their discrete analogues, e.g. $X(t+1)-X(t)$ for $X'(t)$, $X(t+2)-2X(t+1)+X(t)$ for $X''(t)$,
etc. Therefore, the instant energy of $X$ takes the form
\[ 
    \EE[X](t):= \left|  \sum_{m=0}^{M} \ell_m X(t+m) \right|^2,
\] 
and using the integral representation
\begin{eqnarray*}
    \sum_{m=0}^{M} \ell_m X(t+m)
    &=&\int_\T \hg(u) \left( \sum_{m=0}^{M} \ell_m e^{i m u}\right) e^{i t u} \W(du)
    \\
    &:=& \int_\T \hg(u)\, \ell(e^{i u})\, e^{i t u} \W(du)
\end{eqnarray*}
with the polynomial
\[ 
  \ell(z):= \sum_{m=0}^{M} \ell_m z^{m}
\] 
we have
\[
    \E \EE[X](t)  = \int_\T |\hg(u)|^2 |\ell(e^{iu})|^2 \mu(du),  \qquad t \in\Z.
\]

The discrete-time version of problem \eqref{EE0} becomes
\be \label{EE0_discr}
    \int_\T \left[ |\hg(u)-1|^2 + |\hg(u)|^2 |\ell(e^{iu})|^2 \right] \mu(du) \searrow \min.
\ee
Again, one can also consider this problem with arbitrary
function $\ell(\cdot)$ instead of the polynomial. The discrete-time  analogue of kinetic energy corresponds to the increment $\al(X(t+1)-X(t))$,
i.e.\ to the polynomial  $\ell(z)= \al(z-1)$.

One can consider the problem \eqref{EE0_discr} either in the non-adaptive setting, or in
the adaptive setting by requiring additionally
\[
   \hg \in \hG  :=\overline{{\rm span}}\{ e^{i\tau u},\tau \le 0, \tau\in\Z  \big| \L_2(\T,\mu)  \} .
\]

\section{First step: solution of the non-adaptive problem}
\subsection{Continuous time}
As we have seen for continuous-time setting,  our problem states in \eqref{EE0} as
\be \label{EE0a}
    \int_\R \left[ |\hg(u)-1|^2 +  |\hg(u)|^2 |\ell(iu)|^2 \right] \mu(du) \searrow \min,
\ee
For any complex numbers $\hg$ and $\ell$  we have an identity
\be \label{EE0a_next}
  |\hg-1|^2 +  |\hg|^2 |\ell|^2
  =
   \left( |\ell|^2 +1\right) \left| \hg-\frac{1}{ |\ell|^2 +1}\right|^2
    +\frac{|\ell|^2}{ |\ell|^2 +1}\ .
\ee
Therefore, in the non-adaptive setting, where no further
restrictions are imposed on the function $\hg$, the solution
to \eqref{EE0a} is given by the function
\be \label{hg_star}
   \hg_*(u):= \frac{1}{ |\ell(iu)|^2 +1}\ , \qquad u\in \R,
\ee
depending on the energy form $\ell$ but not on the spectral measure $\mu$. The minimum in \eqref{EE0a}
is equal to
\[
   \errna :=    \int_\R  \frac{|\ell(iu)|^2}{ |\ell(iu)|^2 +1}\ \mu(du).
\]
It is natural to call this quantity {\it non-adaptive approximation error}.
In the control theory  the term {\it problem cost} is also used.

In the simplest case of the kinetic energy $\ell(z)=\al z$,
 where $\al>0$ is a scaling parameter, we get
\be \label{errna_kin}
   \errna :=    \int_\R  \frac{\al^2 u^2}{ \al^2 u^2 +1}\ \mu(du).
\ee
Since  $\hg_*(u)= \frac{1}{ \al^2 u^2 +1}$ is the inverse Fourier transform for
\[
    g_*(t) := \frac 1{2\al}\ \exp\{-|t|/\al \}, \qquad t\in \R,
\]
we conclude that the solution to non-adaptive problem with kinetic energy
for stationary processes
is given, as suggested in \eqref{XgB}, by the moving average process
\be \label{XgB_star}
   X(t) = \frac 1{2\al} \int_\R \exp\{-|\tau|/\al\} B(t+\tau) d\tau.
\ee
Notice that this solution is indeed non-adaptive because the future values of $B$
are involved into approximation.
The formula \eqref{hg_star} was obtained in \cite{IKL}, see also \cite{KabLi} for the case of
kinetic energy.

However, if adaptivity restriction is imposed on $\hg$, then
\eqref{hg_star} does not apply and we have to minimize the spectrum-dependent integral.
By \eqref{EE0a_next}, the problem \eqref{EE0a} reduces to
\be \label{EE_add}
    \int_\R   \left| \hg(u)-\frac{1}{ |\ell(iu)|^2 +1}\right|^2   \left( |\ell(iu)|^2 +1\right)
    \mu(du) \searrow \min.
\ee
This minimum (taken over $\hg \in \hG$) will be called {\it additional adaptivity error} and denoted
$\errap$. This is the price we must pay for not knowing the future. The {\it total
approximation error}, i.e.\ the minimum in \eqref{EE0a} over $\hg \in \hG$ is then equal to
\[
   \erra := \errna + \errap.
\]
These formulae were obtained in \cite{IKL}.

\subsection{Discrete time}
In the discrete-time setting, the situation is completely similar because the problem
\eqref{EE0_discr} differs from \eqref{EE0a} only by replacing the spectral domain $\R$
with $\T$ and $\ell(iu)$ with $\ell(e^{iu})$. Therefore, we obtain
the expression for the non-adaptive  error
\[
   \errna :=    \int_\T  \frac{|\ell(e^{iu})|^2}{ |\ell(e^{iu})|^2 +1}\ \mu(du)
\]
attained by the minimizer
\[
   \hg_*(u):= \frac{1}{ |\ell(e^{iu})|^2 +1}\ , \qquad u\in \T.
\]
In the simplest case of the  discrete-time kinetic energy $\ell(z)=\al (z-1)$,
we have
\be \label{series1_adapt}
   \hg_*(u)
   = \frac{1}{\sqrt{1+4\al^2}} \left( 1+ \sum_{k=1}^\infty \b^{-k}
   \left( e^{i k u}+ e^{-i k u}\right)\right),
\ee
where
\be \label{beta}
  \b=\frac{2\al^2+1+\sqrt{1+4\al^2}}{2\al^2}>1.
\ee
The analogue of  \eqref{XgB_star} is
\[ 
  X(t) =  \frac{1}{\sqrt{1+4\al^2}} \left( B(t) + \sum_{k=1}^\infty \b^{-k}
   \left( B(t+k)+ B(t-k) \right)\right),
\] 
while
\be \label{errna_kin_discr}
   \errna =    \int_\T  \frac{\al^2 |e^{iu}-1|^2}{ \al^2 |e^{iu}-1|^2 +1}\ \mu(du).
\ee


\section{Solutions to adaptive approximation problem}

Recall that adaptive approximation problem for continuous-time processes was reduced
in \eqref{EE_add} to solving the problem
\be \label{EE_add_b}
    \int_\R   \left| \hg(u)-\frac{1}{ |\ell(iu)|^2 +1}\right|^2   \left( |\ell(iu)|^2 +1\right)
    \mu(du) \searrow \min
\ee
over $\hg\in \hG$. This looks very much as a classical prediction problem, except for
the function to be approximated: in our setting it is $\frac{1}{ |\ell(iu)|^2 +1}$, while in prediction
problem it is $e^{i\tau u}$ for some $\tau>0$. Therefore we may either directly reduce
the approximation problem to the prediction problem, or to use methods that are usually
used for solving the prediction problems. The latter way seems to be more efficient and general.

\subsection{Straight reduction to prediction problems}
\subsubsection{Continuous time}
Consider continuous-time setting.
Assume that we have an appropriate (to be made precise a bit later) factorization
\be \label{fact_c}
   |\ell(iu)|^2 +1 = \lambda_\ell(u)\,  \overline{\lambda_\ell(u)}= |\lambda_\ell(u)|^2,
   \qquad u \in \R.
\ee
Then the left-hand side of \eqref{EE_add_b} becomes
\be \label{EE_add_c}
       \int_\R   \left| \lambda_\ell(u)\, \hg(u) -  \frac{1}{\overline{\lambda_\ell(u)}} \right|^2  \mu(du).
\ee
Recall that the classical prediction problem is
\be \label{EE_classical_prediction}
    \int_\R   \left| \hg(u)- e^{i\tau u} \right|^2   \mu(du) \searrow \min,
    \qquad \tau\in\R,
\ee
over $\hg\in\hG$. Let $\widehat{q}_*^{(\tau,\mu)}$  denote the solution
of this problem.
The solution of the general prediction problem
\[
    \int_\R   \left| \hg(u)- v(u) \right|^2   \mu(du) \searrow \min
\]
is linear in $v$. Therefore, if there is a representation
\be \label{lambar}
  \frac{1}{\lambda_\ell(u)} = \int_0^\infty e^{-i\tau u} \nu_\ell(d\tau),
   \qquad u\in \R,
\ee
with some finite complex measure $\nu_\ell$ depending on the energy polynomial $\ell(\cdot)$, then the function
\[
      \widehat{q}_*^{(\ell,\mu)}(u) := \int_0^\infty  \widehat{q}_*^{(\tau,\mu)}(u) \overline{\nu_\ell}(d\tau),
     \qquad u \in \R,
\]
belongs to $\hG$ and satisfies
\[
       \int_\R   \left|   \widehat{q}_*^{(\ell,\mu)}(u)  -  \frac{1}{\overline{\lambda_\ell(u)}} \right|^2  \mu(du)
			 \le
       \int_\R   \left| \hg(u)  -  \frac{1}{\overline{\lambda_\ell(u)}} \right|^2  \mu(du), \qquad \hg\in \hG.
\]
It also follows from representation \eqref{lambar} that for any $\hg\in\hG$ we have $ \frac{1}{\lambda_\ell}\, \hg\in\hG$.

Now we impose another assumption on the factorization \eqref{fact_c}:
\be \label{lam_hg}
  \textrm{If } \hg\in\hG \textrm{ and } \lambda_\ell\ \hg  \in \L_2(\R,\mu), \textrm{ then one must have }
  \lambda_\ell\ \hg\in\hG.
\ee
If conditions  \eqref{lambar} and  \eqref{lam_hg} are verified, then the function
\[
     \hg_*(u):=\lambda_\ell(u)^{-1} \widehat{q}_*^{(\ell,\mu)}(u)
\]		
minimizes \eqref{EE_add_c} over $\hg\in \hG$ and thus solves  the problem  \eqref{EE_add_b}.
Indeed, let  $\hg\in\hG$. If $\lambda_\ell \hg  \not\in \L_2(\R,\mu)$, then expression
\eqref{EE_add_c} is infinite because the bounded function $1/\overline{\lambda_\ell}$
belongs to $\L_2(\R,\mu)$. Let now $\lambda_\ell \hg \in \L_2(\R,\mu)$.
Then, using the optimality of $\widehat{q}_*^{(\ell,\mu)}(u)$ and   inclusion $ \lambda_\ell\,\hg\in\hG$
we have
\begin{eqnarray*}
 \int_\R   \left| \lambda_\ell(u)\, \hg_*(u) -  \frac{1}{\overline{\lambda_\ell(u)}} \right|^2  \mu(du)
&=&
 \int_\R   \left|  \widehat{q}_*^{(\ell,\mu)}(u) -  \frac{1}{\overline{\lambda_\ell(u)}} \right|^2  \mu(du)
\\
&\le&
 \int_\R   \left| \lambda_\ell(u)\, \hg (u) -  \frac{1}{\overline{\lambda_\ell(u)}} \right|^2  \mu(du)
\end{eqnarray*}
and the problem is solved.

We stress that for polynomials a representation with properties \eqref{lambar} and  \eqref{lam_hg}
is always possible.
Indeed, let $\ell(\cdot)$ be a polynomial of degree $M$ with complex coefficients.
Then for all real $u$ we have
\[
   1+|\ell(iu)|^2 = 1+ \ell(iu) \overline{\ell(iu)} := \PP(u),
\]
where $\PP$ is a polynomial of degree $2M$ with real coefficients. Therefore, if
$\b$ is a root of $\PP$, then $\overline{\b}$ also is its root. Notice also that $\PP$ has no real roots.
Thus we may write
\[
   \PP(u) = C \prod_{m=1}^M (u-\b_m)  (u-\overline{\b_m}),
\]
where $\Im(\b_m)>0$ and $C>0$ (which follows by letting $u=0$). Finally, we obtain
\[
   1+|\ell(iu)|^2 = \PP(u) = \lambda_\ell(u) \ \overline{ \lambda_\ell(u)}, \qquad u\in \R,
\]
where
\be \label{fac_lam3}
    \lambda_\ell(u):= C^{1/2}  \prod_{m=1}^M (u-\b_m).
\ee
In this case the representation
\[
   \frac{1}{u-\b_m} =
  i \int_0^\infty \exp(-i(u-\b_m)\tau) d\tau
\]
holds true and the existence of representation \eqref{lambar} for
$\frac{1}{\lambda_\ell}$ follows.

In order to verify \eqref{lam_hg}, notice that, since the polynomial
$\lambda_\ell$ is bounded away from zero, condition
$\lambda_\ell\ \hg  \in \L_2(\R,\mu)$ is equivalent to
\[
  \int_\R |u|^{2m}\, |\hg(u)|^2 \mu(du)<\infty,
  \qquad 1\le m \le M.
\]
Using this fact with $m=1$ we see that the functions
\[
   \hg_\delta(u):= \frac{1-e^{-i\delta u}}{i\delta}\, \hg(u) \in \hG
\]
converge to the function $u \hg(u)$ in  $\L_2(\R,\mu)$ as $\delta\to 0$. Therefore,
the limit also belongs to  $\hG$. One continues by induction and concludes
that all functions  $u^m \hg(u)$ for $1\le m\le M$ belong to  $\hG$.
Obviously, the same is true for their linear combination: $\lambda_\ell\ \hg \in \hG$.

For example, for continuous-time kinetic energy $\ell(z)=\al z$, we may use the factorization
\[ 
 |\ell(iu)|^2+1 = \al^2 u^2 +1
 := \lambda_\ell(u) \overline{\lambda_\ell(u)},
\] 
where $\lambda_\ell(u):= 1+i\al u$
and
\be \label{lambinv_kc}
    \frac 1{\lambda_\ell(u)}
    =\frac{1}{\al }  \int_{0}^\infty  e^{-\tau/\al} e^{-i \tau u} d\tau.
\ee
\medskip

\subsubsection{Discrete time}
In the discrete-time setting, one should only replace $\R$ with $\T$ and
$\ell(iu)$ with $\ell(e^{iu})$ in \eqref{EE_add_c}. Now we need a factorization
\[
   |\ell(e^{iu})|^2 +1 = \lambda_\ell(u) \overline{\lambda_\ell(u)}= |\lambda_\ell(u)|^2,
   \qquad u \in \T,
\]
with $\lambda_\ell(\cdot)^{-1}$ admitting a representation
\be \label{lambar_d}
   \frac{1}{\lambda_\ell(u)} = \sum_{\tau=0}^\infty  \nu_\ell(\tau) e^{-i\tau u} ,
   \qquad u\in \T,
\ee
in place of \eqref{lambar}, and satisfying an obvious analogue of \eqref{lam_hg}, i.e.\
\be \label{lam_hg_d}
  \textrm{If } \hg\in\hG \textrm{ and } \lambda_\ell\ \hg \in \L_2(\T,\mu),
	\textrm{ then one must have } \lambda_\ell\ \hg\in\hG.
\ee
Then the function
\[
   \hg_*(u)
   := \lambda_\ell(u)^{-1} \sum_{\tau=0}^\infty \nu_\ell(\tau) \widehat{q}_*^{(\tau,\mu)}(u),
     \qquad u \in \T,
\]
where the function $\widehat{q}_*^{(\tau,\mu)}$ is the minimizer in the classical prediction problem, cf.\ \eqref{EE_classical_prediction}, provides a solution for our problem.

We explain now how to construct the required factorizations for the typical forms of energy
represented by arbitrary complex polynomials $\ell(\cdot)$. Indeed, let
\[
    \ell(z):=\sum_{m=0}^M \ell_m z^m
\]
with $\ell_M\not=0$. Then for $z$ on the unit circle
\begin{eqnarray*}
   1+|\ell(z)|^2 &=& 1+ \ell(z) \overline{\ell(z)}
\\
     &=& 1 + \left(\sum_{m=0}^M \ell_m z^m\right)  \left(\sum_{m=0}^M \overline{\ell_m} z^{-m}\right)
\\
     &:=& \frac{\PP(z)}{z^M},
\end{eqnarray*}
where
\[
   \PP(z):= \sum_{m=0}^{2M} p_m z^m
\]
is a polynomial of degree at most $2M$ with coefficients satisfying Hermitian symmetry condition
$p_{2M-m}=\overline{p_m}$. Due to this symmetry, if
$\b\not=0$ is a root of $\PP$, then $1/\overline{\b}$ also is its root. Notice also that $\PP$ has no
roots on the unit circle.
Assume temporarily that $\ell_0\not=0$. Then $p_0=\ell_0\overline{\ell_M}\not=0$,
hence zero is not a root of $\PP$
and we may write
\begin{eqnarray*}
    1+|\ell(z)|^2 &=&  \frac{\PP(z)}{z^M} = C \prod_{m=1}^M (z-\b_m)  (z-\frac1{\overline{\b}_m}) \frac1z
\\
   &=&  \frac{ (-1)^M C} { \prod_{m=1}^M \overline{\b}_m} \prod_{m=1}^M (z-\b_m)(\overline{z}-\overline \b_m)
\end{eqnarray*}
for some  complex $C$ and $|\b_m|>1$. Letting, say, $z=1$, shows that the exterior constant is positive:
\[
   R:= \frac{ (-1)^M C} { \prod_{m=1}^M \overline{\b}_m} >0.
\]
Hence, we have the factorization
\[
    1+|\ell(e^{iu})|^2  = \lambda_\ell(u) \ \overline{ \lambda_\ell(u)}, \qquad u\in \T,
\]
with
\be \label{fac_lam2}
    \lambda_\ell(u):= R^{1/2}
    \prod_{m=1}^M (e^{-iu}-\overline \b_m).
\ee
 The proof of the required properties is the same as in the
continuous-time case. It is therefore omitted.

Finally, notice that the temporary assumption $\ell_0\not=0$ may be easily dropped.
Indeed, in the general case we may always write
$\ell(z)=z^k \widetilde\ell(z)$ with some $k\le M$ and $\widetilde\ell(0)\not=0$.
Then the factorization for $\widetilde\ell$ also applies to $\ell$ because
$1+|\ell(\cdot)|^2=1+|\widetilde\ell(\cdot)|^2$ on the unit circle.

For discrete-time  kinetic energy $\ell(z)=\al(z-1)$, we may use a factorization
\be \label{fact_kd2}
 |\ell(e^{iu})|^2+1 = \frac{\al^2}{\b} (e^{-iu}-\b)(e^{iu}-\b)
 := \lambda_\ell(u)\, \overline{\lambda_\ell(u)}
\ee
with
\[
   \lambda_\ell(u):= \frac{\al}{\sqrt{\b}} (e^{-iu}-\b)
\]
and $\b$ from \eqref{beta}. In this case we see that
\be \label{lambar_kd}
    \frac{1}{ \lambda_\ell(u)}
    =\frac{-1}{\al\sqrt{\b}} \cdot \frac{1}{1-e^{-iu}/\b }
    =  \frac{-1}{\al\sqrt{\b}} \sum_{\tau=0}^\infty  \b^{-\tau} e^{-i \tau u}
\ee
holds as a version of \eqref{lambar_d}.

\subsection{Application of prediction technique}
\subsubsection{Discrete time}

We first recall few notions used in the analytical prediction technique.
Let
\begin{align*}
L^2_{\le 0}&:= \overline{\rm span}\{e^{i \tau u}, \tau\le 0, \tau \in \Z \big| \L_2(\T,\Lambda)\}, \\
L^2_{> 0}&:= \overline{\rm span}\{e^{i \tau u}, \tau>0, \tau \in \Z \big| \L_2(\T,\Lambda)\},
\end{align*}
where $\Lambda$ denotes Lebesgue measure, be the spaces of spectrally negative
and spectrally positive functions.
We will need a special class of {\it outer} functions. We do not recall the direct
formal definition of an outer function, cf.\ \cite[p.342]{Ru}; instead, we use the following
characterization, cf.\  \cite[Theorem 17.23]{Ru}: a function $\gamma \in \L_2(\T,\Lambda)$
is a conjugated outer function iff
\be \label{outer_d}
     \overline{\rm span}\{ \gamma e^{i\tau u}, \tau\le 0, \tau \in\Z \big|  \L_2(\T,\Lambda) \} =  L^2_{\le 0}.
\ee
We stress that these functions are {\it complex conjugated} to outer functions as defined in Rudin
\cite{Ru}. In the sequel, however, we call them simply ``outer functions''; this omission should not lead
to any misunderstanding.

Now we pass to the optimization problem. By \eqref{EE_add}, we have to compute
\[
   \errap =  \min_{\hg\in\hG}  \int_\T   \left| \hg(u)-\frac{1}{ |\ell(e^{iu})|^2 +1}\right|^2
   \left( |\ell(e^{iu})|^2 +1  \right)  \mu(du),
\]
where
\[
  \hG:= \overline{\rm span}\{ e^{i\tau u}, \tau\le 0, \tau \in\Z \big|  \L_2(\T,\mu) \}.
\]
Assume that the spectral measure has a density on $\T$ satisfying Kolmogorov's regularity condition,
i.e.\ $\mu(du)=f(u)du$ and
\be \label{Kolm}
 \int_\T |\ln f(u)| \ du < \infty.
\ee
The classical prediction technique suggests to find factorizations
\begin{eqnarray}
  \label{fact_gam}
  f(u)  =  \gamma_f(u) \overline{\gamma_f(u)}= |\gamma_f(u)|^2,
  \qquad u \in \T,
\\  \label{fact_lam}
	 |\ell(e^{iu})|^2 +1 = \lambda_\ell(u) \overline{\lambda_\ell(u)}= |\lambda_\ell(u)|^2,
   \qquad u \in \T,
\end{eqnarray}
with $\gamma_f$ being an outer function and $\lambda_\ell$, as above,
satisfying conditions \eqref{lambar_d} and \eqref{lam_hg_d}.
Notice that assumption \eqref{Kolm} implies the existence of factorization \eqref{fact_gam},
cf.\ \cite[Theorem 17.16]{Ru}. Factorization \eqref{fact_lam}
in the case of polynomial $\ell$ was given in \eqref{fac_lam2}.

\begin{thm}
Let $Q_{>0}$ be the orthogonal projection of $\gamma_f / \overline{\lambda_\ell}$ onto $L^2_{>0}$ in the Hilbert space $L^2 (\T, \Lambda)$.
Then the optimal adaptive approximation is given by $X(t) = \int_\T \hg_*(u) e^{itu} \W(du)$ with
$$
\hg_* (u) = \frac 1 {|\lambda_\ell|^2} - \frac{Q_{>0}}{\lambda_\ell \gamma_f}.
$$
The error of the adaptive approximation is given by
$
\errap = \|Q_{>0}\|_2^2
$.
\end{thm}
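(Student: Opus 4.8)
\medskip

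The plan is to carry out the reduction scheme of Section~3.2 in full, using the outer factorization of $f$ to convert the weighted approximation problem into an unweighted one in $\L_2(\T,\Lambda)$, where the geometry of Hardy spaces applies directly. First I would start from the form
\[
   \errap = \min_{\hg\in\hG}\int_\T\left|\hg(u)-\frac{1}{|\lambda_\ell(u)|^2}\right|^2 |\lambda_\ell(u)|^2\, f(u)\,du,
\]
using \eqref{EE_add} together with the factorization \eqref{fact_lam}. Writing $f = \gamma_f\overline{\gamma_f}$ via the outer factorization \eqref{fact_gam}, the integrand becomes $\bigl|\lambda_\ell\gamma_f\,\hg - \gamma_f/\overline{\lambda_\ell}\bigr|^2$ with respect to Lebesgue measure. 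The key point is then a change of variables $\hh := \lambda_\ell\gamma_f\,\hg$: I claim that as $\hg$ ranges over $\hG$ (those for which $\lambda_\ell\gamma_f\hg\in\L_2(\T,\Lambda)$, the only ones giving finite value since $\gamma_f/\overline{\lambda_\ell}\in\L_2(\T,\Lambda)$), the function $\hh$ ranges exactly over $L^2_{\le 0}$. This is where assumptions \eqref{lam_hg_d} and the outer characterization \eqref{outer_d} do their work: multiplication by $\gamma_f$ maps $\overline{\rm span}\{e^{i\tau u},\tau\le 0\mid \L_2(\T,\mu)\}$ onto $\overline{\rm span}\{\gamma_f e^{i\tau u},\tau\le 0\mid\L_2(\T,\Lambda)\} = L^2_{\le 0}$ — indeed $\|\hg\|_{\L_2(\mu)} = \|\gamma_f\hg\|_{\L_2(\Lambda)}$ makes this an isometry onto — and then multiplication by $\lambda_\ell$ preserves $L^2_{\le 0}$ by \eqref{lam_hg_d} and \eqref{lambar_d}; the reverse inclusions follow by dividing by $\lambda_\ell$ (legal since $1/\lambda_\ell\in L^2_{\le 0}$ is bounded by \eqref{lambar_d}) and by $\gamma_f$ (legal by outerness).

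\medskip

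Once this is established, the problem is simply
\[
   \errap = \min_{\hh\in L^2_{\le 0}}\;\bigl\| \hh - \tfrac{\gamma_f}{\overline{\lambda_\ell}}\bigr\|_{\L_2(\T,\Lambda)}^2,
\]
i.e.\ the squared distance from $\gamma_f/\overline{\lambda_\ell}$ to the closed subspace $L^2_{\le 0}$. Since $\L_2(\T,\Lambda) = L^2_{\le 0}\oplus L^2_{>0}$ orthogonally, the minimizer is the orthogonal projection onto $L^2_{\le 0}$, the residual is $Q_{>0}$ (the projection onto $L^2_{>0}$), and the minimal value is $\|Q_{>0}\|_2^2$, giving the claimed error formula immediately. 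For the optimal $\hh_*$ we have $\hh_* = \gamma_f/\overline{\lambda_\ell} - Q_{>0}$, and undoing the substitution,
\[
   \hg_*(u) = \frac{\hh_*(u)}{\lambda_\ell(u)\gamma_f(u)} = \frac{1}{\lambda_\ell(u)\overline{\lambda_\ell(u)}} - \frac{Q_{>0}(u)}{\lambda_\ell(u)\gamma_f(u)} = \frac{1}{|\lambda_\ell(u)|^2} - \frac{Q_{>0}(u)}{\lambda_\ell(u)\gamma_f(u)},
\]
which is exactly the asserted formula; the representation $X(t)=\int_\T\hg_*(u)e^{itu}\W(du)$ then follows from \eqref{XW} and the fact that $\hg_*\in\hG$ by construction.

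\medskip

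The main obstacle is the bijection claim $\{\lambda_\ell\gamma_f\hg : \hg\in\hG,\ \lambda_\ell\gamma_f\hg\in\L_2(\Lambda)\} = L^2_{\le 0}$, since it is the only nonformal step: one direction requires knowing that $\gamma_f\hG = L^2_{\le 0}$, which is the content of the outer-function characterization \eqref{outer_d} combined with density (the span of $\{e^{i\tau u},\tau\le 0\}$ is dense in $\hG\subset\L_2(\mu)$ and $\gamma_f$-multiplication is an $\L_2(\mu)\to\L_2(\Lambda)$ isometry onto its image), and the other direction requires that both $1/\lambda_\ell$ and $1/\gamma_f$ act on $L^2_{\le 0}$ staying inside it — the first by \eqref{lambar_d}, the second again by outerness of $\gamma_f$. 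Care is also needed that the "finite-value" restriction on $\hg$ is exactly matched by the $\L_2(\Lambda)$-membership of $\hh$, so that no admissible competitors are lost or gained; this is where one invokes that $\gamma_f/\overline{\lambda_\ell}$ is itself in $\L_2(\T,\Lambda)$ (as $\gamma_f\in\L_2(\Lambda)$ and $\lambda_\ell$ is bounded below). Everything else — the orthogonal decomposition of $\L_2(\T,\Lambda)$ and the Pythagorean identity — is standard Hardy space theory.
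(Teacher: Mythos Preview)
Your proposal is correct and follows essentially the same route as the paper: rewrite the weighted problem via the factorizations as $\min\|\lambda_\ell\gamma_f\hg - \gamma_f/\overline{\lambda_\ell}\|_{\L_2(\Lambda)}^2$, use outerness \eqref{outer_d} together with \eqref{lambar_d}--\eqref{lam_hg_d} to identify the feasible set with $L^2_{\le 0}$, and then apply the orthogonal decomposition $\L_2(\T,\Lambda)=L^2_{\le 0}\oplus L^2_{>0}$. The only cosmetic difference is that you package the two inclusions as a single bijection claim, whereas the paper verifies the lower bound (any feasible $\hh=\lambda_\ell\gamma_f\hg$ lies in $L^2_{\le 0}$) and attainability ($\hg_*:=Q_{\le 0}/(\lambda_\ell\gamma_f)\in\hG$) separately.
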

\begin{proof}
We have
\begin{align}
  \errap
  &=
  \min_{\hg \in \hG}  \int_\T  \left| \lambda_\ell(u)\, \hg(u) -  \frac{1}{\overline{\lambda_\ell(u)}} \right|^2  \mu(du) \label{errap_dL}\\
  &=
   \min_{\hg \in \hG}  \int_\T
  \left| ( \lambda_\ell \gamma_f \hg)(u)-\frac{\gamma_f(u)}{ \overline{\lambda_\ell(u) }} \right|^2 \ du.  \notag
\end{align}
Consider arbitrary $\hg\in\hG$. Without loss of generality we may assume that
$\lambda_\ell\, \hg\in \L_2(\T,\mu)$ (otherwise  the integral in \eqref{errap_dL}  is infinite).
Then by \eqref{lam_hg_d} we have $\lambda_\ell \, \hg\in \hG$, which is equivalent to
\[
  \lambda_\ell \gamma_f \hg
  \in \overline{\rm span}\{ \gamma_f e^{i\tau u}, \tau\le 0, \tau\in\Z \big|   \L_2(\T,\Lambda) \}
  = L^2_{\le 0},
\]
where the latter equality holds by \eqref{outer_d} because $\gamma_f$ is an outer function.

On the other hand, since  $\gamma_f\in L^2(\T,\Lambda)$ and
$|\lambda_\ell|\ge 1$, we have
\[
   Q:=\frac{\gamma_f}{ \overline{\lambda_\ell }} \in \L_2(\T,\Lambda).
\]
Consider the unique orthogonal decomposition in $\L_2(\T,\Lambda)$
\[
   Q:=\frac{\gamma_f}{ \overline{\lambda_\ell }} = Q_{\le 0}+  Q_{>0}
\]
with $Q_{\le 0}\in L^2_{\le 0}$ and $Q_{> 0}\in L^2_{>0}$.
Due to the orthogonality of the spaces $L^2_{\le 0}$ and $L^2_{>0}$,
we clearly have
\[
   \errap \ge \|Q_{>0}\|_2^2.
\]
Moreover,  the equality
\be \label{errQ}
   \errap = \|Q_{>0}\|_2^2
\ee
is attained whenever
\be \label{gstar}
  \hg=\hg_* := \frac{Q_{\le 0}}{ \lambda_\ell \gamma_f}
  = \frac{Q- Q_{>0}}{ \lambda_\ell \gamma_f}
  = \frac 1{|\lambda_\ell|^{2}} -\frac{Q_{>0}}{ \lambda_\ell \gamma_f}.
\ee
It remains to prove that $\hg_*\in \hG$.
Since $\gamma_f$ is an outer function, we have by \eqref{outer_d}
\[
  Q_{\le 0} \in L^2_{\le 0}
  =\overline{\rm span}\{ \gamma_f e^{i\tau u}, \tau\le 0,\tau\in\Z \big| \L_2(\T,\Lambda) \},
\]
which is equivalent to
\[
  \frac{Q_{\le 0}} {\gamma_f}
  \in \overline{\rm span}\{  e^{i\tau u}, \tau\le 0, \tau\in\Z \big| \L_2(\T,\mu) \} = \hG.
\]
Finally, we obtain from \eqref{lambar_d} the required inclusion
\[
  \hg_* =  \frac{Q_{\le 0}} {\lambda_\ell \gamma_f} \in \hG,
\]
which completes the proof.
\end{proof}

For the discrete-time kinetic energy $\ell(z)= \al(z-1)$ we may proceed further as follows by using decomposition
\eqref{fact_kd2}. Since $\gamma_f$ is an outer function, it belongs to $L^2_{\le 0}$. By taking the Fourier series expansion
\be \label{gammaj}
  \gamma_f(u) := \sum_{j=0}^\infty \hgamma_j e^{-i j u},
  \qquad u\in \T,
\ee
and multiplying \eqref{lambar_kd} and \eqref{gammaj} we obtain
\begin{eqnarray*}
 Q(u)&=& \frac{-1}{\al\sqrt{\b}}  \sum_{\tau=0}^\infty  \b^{-\tau} e^{i\tau u} \cdot
     \sum_{j=0}^\infty \hgamma_j e^{-i j u}
\\
   &=& \frac{-1}{\al\sqrt{\b}}  \sum_{n=-\infty}^\infty \left[
     \left( \sum_{j=\max(-n,0)}^\infty \hgamma_j   \b^{-(n+j)} \right) e^{i n u} \right]
\\
   &=& \frac{-1}{\al\sqrt{\b}}  \sum_{n=-\infty}^\infty \left[
     \left( \sum_{j=\max(-n,0)}^\infty \hgamma_j  \b^{-j} \right) \b^{-n} e^{i n u} \right].
\end{eqnarray*}
Hence,
\begin{eqnarray} \nonumber
  Q_{>0} (u) &=&
  \frac{-1}{\al\sqrt{\b}}     \left( \sum_{j=0}^\infty \hgamma_j  \b^{-j} \right)
   \left( \sum_{n=1}^\infty  \b^{-n} e^{i n u} \right)
\\ \label{Q_kd1}
  &=& \frac{- \KK}{\al\sqrt{\b}}   \left( \sum_{n=1}^\infty
     \b^{-n} e^{i n u} \right)
\\ \label{Q_kd2}
  &=&
     \frac{- \KK }{\al\sqrt{\b}} \  \frac{e^{iu}}{\b -e^{iu}}
   =
     \frac{\KK }{\b} \  \frac{e^{iu}}{ \overline{\lambda_\ell(u) }}
\end{eqnarray}
with
\be \label{Kf}
    \KK := \sum_{j=0}^\infty \hgamma_j  \b^{-j}.
\ee
By  \eqref{errQ} and \eqref{Q_kd1}, it follows that
\begin{eqnarray} \nonumber
  \errap &=& \|Q_{>0}\|_2^2 = \frac{1}{\al^2\b} |\KK|^2
  \frac{2\pi}{\b^2-1}
  \\ \label{err_kd}
  &=& \frac{2\pi}{\al^2\b^2} |\KK|^2  \frac{1}{\b-1/\b}
  = \frac{2\pi}{\b^2\sqrt{1+4\al^2}} \, |\KK|^2,
\end{eqnarray}
where we used the identity
\be \label{id_bmb}
   \b-\frac 1\b=\frac{\sqrt{1+4\al^2}}{\al^2}.
\ee
We also have from  \eqref{gstar} and  \eqref{Q_kd2}
\be \label{gstar_kd}
  \hg_*(u)
  =
  \frac 1 {|\lambda_\ell(u)|^{2}} \left( 1 -\frac{\KK e^{iu}} {\b \gamma_f(u)}\right)
  =
  \frac {\left( 1 -\frac{\KK e^{iu}} {\b \gamma_f(u)}\right)} {2\al^2 (1-\cos u) + 1},
  \quad u\in\T.
\ee

In the case when $\ell$ is an arbitrary polynomial, we can use 
\eqref{fac_lam2} to construct a partial fraction decomposition of 
$1/\overline{\lambda_l}$ into a linear combination of fractions of the form 
$1/ (1-e^{iu}\beta_m^{-1})$ provided the numbers $\beta_m$ are pairwise 
distinct. Then we can apply the above considerations to every fraction 
separately.

It is possible to provide an explicit formula for the outer function $\gamma_f$ 
and the constant $\KK$ in terms of the spectral density $f$. To this end, 
consider the function
$$
   q(z) :=\exp\left\{\frac 1 {2\pi} \int_\T \frac{e^{iu} + z}{e^{iu} - z} 
           \ln \sqrt{f(u)} \, du\right\}, \quad |z|<1.
$$
By \cite[Theorem 17.16]{Ru}, the radial limits of its absolute value $|q|$ are 
Lebesgue-a.e.\ given by
$$
   \lim_{r\uparrow 1} |q(r e^{iu})| = \sqrt{f(u)}, \quad u \in\T.
$$
Since the function $\sqrt{f}$ is square integrable,  \cite[Theorem 17.16(c)]{Ru} 
implies that the function $q$ belongs to the Hardy space $H^2$ on the unit disc. 
Defining
$$
   \gamma_f(u) = \lim_{r\uparrow 1} \overline{q(r e^{iu})}, \quad u\in\T,
$$
we clearly have $|\gamma_f(u)|^2 = f(u)$ for $u\in\T$. Also, $\gamma_f$ is a 
(complex conjugate of an) outer function by \cite[Definition 17.14]{Ru}. 
The Fourier series representation of $\gamma_f$ given in \eqref{gammaj} 
translates into a Taylor series representation of $q$ as follows:
$$
   q(z) = \sum_{j=0}^\infty \overline{\hgamma_j} z^j, \quad |z|<1.
$$
Returning to the case $\ell(z)= \al(z-1)$, it follows from \eqref{Kf} that
$$
    \KK = \overline{q(1/\beta)} = \exp\left\{\frac 1 {4\pi} 
    \int_\T \frac{e^{-iu} + \beta^{-1}}{e^{-iu} - \beta^{-1}} \ln f(u) \, du\right\}.
$$
Recalling \eqref{err_kd} and doing straightforward transformations, 
we arrive at the following

\begin{thm}
In the discrete-time case with $\ell(z) = \alpha (z-1)$, the additional 
adaptivity error is given by
\be\label{ERRAP_discr}
   \errap = \frac{2\pi}{\beta^2 \sqrt{1+4\alpha^2}} \exp\left\{\frac 1 {2\pi} 
   \int_{\T} \frac{\beta^2-1}{\beta^2+1-2\beta \cos u} \ln f(u)\, du \right\},
\ee
where $\beta = (2\al^2+1+\sqrt{1+4\al^2})/(2\al^2)$.
\end{thm}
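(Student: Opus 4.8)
\medskip
\noindent\textbf{Proof plan.}
The plan is to read off \eqref{ERRAP_discr} directly from the material already assembled for the kinetic energy $\ell(z)=\al(z-1)$, so essentially nothing new has to be proved. By \eqref{err_kd} the adaptivity error is
\[
   \errap = \frac{2\pi}{\beta^2\sqrt{1+4\al^2}}\,|\KK|^2 ,
\]
while the constant $\KK$ of \eqref{Kf} was just identified as $\overline{q(1/\beta)}$, that is,
\[
   \KK = \exp\left\{\frac{1}{4\pi}\int_\T \frac{e^{-iu}+\beta^{-1}}{e^{-iu}-\beta^{-1}}\,\ln f(u)\,du\right\} .
\]
Hence the whole statement reduces to evaluating $|\KK|^2$ and substituting; the displayed value of $\beta$ is merely a copy of \eqref{beta}.

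The key step is the computation of $|\KK|^2$. Since $f$ is a nonnegative density, $\ln f$ is real-valued, and since $\beta$ is real, complex conjugation of $\KK$ merely replaces $e^{-iu}$ by $e^{iu}$ in the kernel; therefore
\[
   |\KK|^2 = \KK\,\overline{\KK}
   = \exp\left\{\frac{1}{2\pi}\int_\T \Re\!\left(\frac{e^{-iu}+\beta^{-1}}{e^{-iu}-\beta^{-1}}\right)\ln f(u)\,du\right\} .
\]
Then I would carry out the elementary algebra: writing the M\"obius factor over the common denominator $|e^{-iu}-\beta^{-1}|^2 = 1+\beta^{-2}-2\beta^{-1}\cos u$ and using $|e^{-iu}|=1$, one gets
\[
   \Re\!\left(\frac{e^{-iu}+\beta^{-1}}{e^{-iu}-\beta^{-1}}\right)
   = \frac{1-\beta^{-2}}{1+\beta^{-2}-2\beta^{-1}\cos u}
   = \frac{\beta^2-1}{\beta^2+1-2\beta\cos u},
\]
which is exactly the Poisson kernel of the unit disc at the point $1/\beta$. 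Plugging this into the exponent, and then the resulting $|\KK|^2$ into the formula for $\errap$ above, yields \eqref{ERRAP_discr}.

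I do not expect a genuine obstacle: the statement is a corollary of the preceding derivation, and the argument is a couple of lines of bookkeeping. The only points that deserve a moment of care are (i) the convergence of the integral defining $\KK$ — which holds because $1/\beta$ lies strictly inside the unit disc, so the kernel is bounded, while $\ln f\in\L_1(\T,\Lambda)$ by the Kolmogorov condition \eqref{Kolm} — and (ii) the conjugation identity $|\KK|^2=\KK\overline{\KK}$ used together with $\overline{\ln f}=\ln f$ and the real-part calculation above.
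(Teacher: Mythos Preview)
Your proposal is correct and follows exactly the paper's route: the paper also starts from \eqref{err_kd}, identifies $\KK=\overline{q(1/\beta)}$, and then says ``doing straightforward transformations'' to arrive at \eqref{ERRAP_discr}; you have simply written out those transformations (the real-part computation yielding the Poisson kernel) explicitly. Nothing further is needed.
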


In the above argument, we assumed that $\int_{\T} |\ln f(u)|\ du < \infty$, but 
\eqref{ERRAP_discr} remains valid even when $\int_{\T} |\ln f(u)|\ du =\infty$. 
Indeed, since $f$ is a density, the latter condition is equivalent to
\be\label{Kolm_anti}
    \int_\T \ln f(u)\, du = -\infty,
\ee
and \eqref {ERRAP_discr} states that $\errap = 0$. This result is easy to explain: 
It is known (e.g.,\ \cite[pp.\ 48--50]{Hoff}) that under~\eqref{Kolm_anti} (or if 
$f$ does not exist at all, see \cite[Corollary 1 on p.\ 46]{Hoff}), it is possible 
to predict the future of the process $X$ on the basis of its past \textit{perfectly}, 
so that there is no difference between the non-adaptive and the adaptive approximation.

Let us look at our approximation problem when $\alpha\downarrow 0$ which means 
that we give less importance to the kinetic energy of the approximating process 
compared to the closeness of the processes. As $\alpha\downarrow 0$ we have 
$\beta = \alpha^{-2} + O(1)\to\infty$, and  \eqref{ERRAP_discr} yields
\be\label{errap_as_al_to_0}
     \errap \sim   
     2\pi \al^4 \exp\left\{\frac 1 {2\pi} \int_{\T} \ln f(u)\, du \right\}.
\ee
The right-hand side looks very much like the classical Kolmogorov formula. Let us 
explain this similarity.  Recall that the classical prediction problem asks to 
predict $B(\tau)$ on the basis of $B(0), B(-1),\ldots$, for $\tau=1,2,\ldots$. 
In particular, in the case of one-step prediction $\tau=1$, the Kolmogorov formula, 
see \cite[Theorem 5.8.1]{BD} or \cite[pp.\ 48--50]{Hoff}, states that the mean 
square error of the optimal prediction is given by
$$
    \sigma^2_{\text{pred}} := \min_{\hh \in \hG} \int_{\T} |e^{iu} - \hh(u)|^2 du  
    = 2\pi \exp\left\{\frac 1 {2\pi} \int_{\T}\ln f(u)\, du \right\}.
$$
 By \eqref{series1_adapt}, the optimal non-adaptive strategy is given by
$$
\hg_{*}^{\text{(nad)}} (u) = 1 + (e^{iu} + e^{-iu} -2) \alpha^2 + O(\alpha^4).
$$
For the optimal adaptive strategy, it is therefore natural to make the ansatz
$$
\hg_{*}^{\text{(ad)}} (u) = 1 + (\hw(u) + e^{-iu} -2) \alpha^2 + O(\alpha^4),
$$
where $\hw$ is some function from $L^2_{\le 0}$. The additional adaptivity error $\errap$ is then
\begin{align*}
\lefteqn{\int_\T   \left| \hg_*^{\text{(ad)}}(u)-\frac{1}{ |\ell(e^{iu})|^2 +1}\right|^2
   \left( |\ell(e^{iu})|^2 +1  \right)  \mu(du)}
\\
&=
\int_\T   \left|1 + (\hw(u) + e^{-iu} -2) \alpha^2  + O(\alpha^4) -\frac{1}{ \alpha^2 |e^{iu} - 1 |^2 +1}\right|^2
\left(1 + O(\alpha^2) \right)  \mu(du)
\\
&\sim
\alpha^{4} \int_\T   \left|\hw(u) - e^{iu} \right|^2
\mu(du).
 \end{align*}
Thus, the function $\hw$ should be chosen as the solution to the classical prediction problem and we should have $\errap  \sim \alpha^4 \sigma^2_{\text{pred}}$.  This explains the similarity between \eqref{errap_as_al_to_0} and Kolmogorov's formula. Observe, finally, that by \eqref{errna_kin_discr},
$$
\errna  \sim    \al^2  \int_\T  |e^{iu}-1|^2 f(u)\, du
$$
as $\alpha\downarrow 0$. Thus, for small $\alpha$ the price for not knowing the future is small compared to the error of the non-adaptive approximation.
\bigskip

\subsubsection{Continuous time}

The approach and the result is very much the same as for stationary sequences except for
some integrability issues. We only replace
$\T$ with $\R$ and redefine the spaces $L^2_{\le 0}$ and $L^2_{>0}$ in $\L_2(\R,\Lambda)$ as the spaces of
Fourier transforms of functions supported by $\R_-$ and $\R_+$, respectively.

Again we use the class of {\it outer} functions, this time with respect to the lower half-plane,
cf.\ \cite[p.36]{DMK} and use the following
characterization, cf.\ \cite[p.39]{DMK}: a function $\gamma \in \L_2(\R,\Lambda)$ is an outer function
for the lower half-plane iff
\be \label{outer_c}
     \overline{\rm span}\{ \gamma e^{i\tau u},\ \tau\le 0\ \big|  \L_2(\R,\Lambda) \} =  L^2_{\le 0}.
\ee

The Kolmogorov regularity condition now looks as follows: $\mu(du)=f(u)du$ and
\be \label{Kolm_c}
   \int_\R \frac{|\ln f(u)|}{1+u^2} \ du < \infty.
\ee
This condition ensures the existence of the factorization
\[ 
  f(u)=   \gamma_f(u) \overline{\gamma_f(u)}= |\gamma_f(u)|^2,
  \qquad u \in \R,
\] 
with $\gamma_f$ being an outer function, cf.\ \cite[p.38]{DMK}.

For the energy function $\ell(\cdot)$ we need a factorization
\[ 
  |\ell(iu)|^2+1=   \lambda_\ell(u) \overline{\lambda_\ell(u)}= |\lambda_\ell(u)|^2 ,
  \qquad u \in \R,
\] 
with $\lambda_\ell$ satisfying properties \eqref{lambar} and \eqref{lam_hg}.
It was shown in \eqref{fac_lam3} above how to construct such factorization for polynomials.

Now the construction of the optimal adaptive approximation and the calculation of the
approximation error are done exactly as in the discrete-time case but we repeat
the approach for completeness of exposition.


\begin{thm}
Let $Q_{>0}$ be the orthogonal projection of $\gamma_f / \overline{\lambda_\ell}$ onto $L^2_{>0}$ in the Hilbert space $L^2 (\R, \Lambda)$.
Then the optimal adaptive approximation is given by $X(t) = \int_\R \hg_*(u) e^{itu} \W(du)$ with
$$
\hg_* (u) = \frac 1 {|\lambda_\ell|^2} - \frac{Q_{>0}}{\lambda_\ell \gamma_f}.
$$
The error of the adaptive approximation is given by
$
\errap = \|Q_{>0}\|_2^2
$.
\end{thm}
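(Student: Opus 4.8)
The plan is to mirror, essentially verbatim, the proof given in the discrete-time case, since the continuous-time setting differs only in the domain of integration (($\R$ in place of $\T$), in the spectral characterization of adaptivity through the lower half-plane, and in some integrability bookkeeping. First I would start from the reduction already established: by \eqref{EE_add_b} and the factorization $|\ell(iu)|^2+1=|\lambda_\ell(u)|^2$ with $\lambda_\ell$ satisfying \eqref{lambar} and \eqref{lam_hg}, together with the density factorization $f=|\gamma_f|^2$ with $\gamma_f$ an outer function for the lower half-plane, we have
$$
\errap = \min_{\hg\in\hG} \int_\R \left| \lambda_\ell(u)\,\hg(u) - \frac{1}{\overline{\lambda_\ell(u)}}\right|^2 \mu(du)
= \min_{\hg\in\hG} \int_\R \left| (\lambda_\ell \gamma_f \hg)(u) - \frac{\gamma_f(u)}{\overline{\lambda_\ell(u)}}\right|^2 du.
$$
Next I would note that for any $\hg\in\hG$ we may assume $\lambda_\ell\hg\in\L_2(\R,\mu)$ (else the integral is $+\infty$, exactly as in the discrete case, using that $1/\overline{\lambda_\ell}$ is bounded by $1$ and $\mu$ is finite, so $1/\overline{\lambda_\ell}\in\L_2(\R,\mu)$). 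Then property \eqref{lam_hg} gives $\lambda_\ell\hg\in\hG$, which by the outer-function characterization \eqref{outer_c} is equivalent to $\lambda_\ell\gamma_f\hg\in L^2_{\le 0}$.

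Then I would introduce $Q:=\gamma_f/\overline{\lambda_\ell}$, which lies in $\L_2(\R,\Lambda)$ since $\gamma_f\in\L_2(\R,\Lambda)$ and $|\lambda_\ell|\ge 1$, and take its orthogonal decomposition $Q = Q_{\le 0}+Q_{>0}$ in $\L_2(\R,\Lambda)$ with $Q_{\le 0}\in L^2_{\le 0}$, $Q_{>0}\in L^2_{>0}$. Because $\lambda_\ell\gamma_f\hg$ ranges over (a subset of) $L^2_{\le 0}$ while $Q_{>0}\perp L^2_{\le 0}$, we immediately get $\errap\ge\|Q_{>0}\|_2^2$, with equality when $\lambda_\ell\gamma_f\hg = Q_{\le 0}$, i.e. for the candidate
$$
\hg_* = \frac{Q_{\le 0}}{\lambda_\ell\gamma_f} = \frac{Q-Q_{>0}}{\lambda_\ell\gamma_f} = \frac{1}{|\lambda_\ell|^2} - \frac{Q_{>0}}{\lambda_\ell\gamma_f}.
$$
The only thing left is to verify the admissibility $\hg_*\in\hG$: since $\gamma_f$ is outer, \eqref{outer_c} gives $Q_{\le 0}\in L^2_{\le 0}=\overline{\rm span}\{\gamma_f e^{i\tau u},\tau\le 0\mid\L_2(\R,\Lambda)\}$, hence $Q_{\le 0}/\gamma_f\in\overline{\rm span}\{e^{i\tau u},\tau\le 0\mid\L_2(\R,\mu)\}=\hG$, and then dividing additionally by $\lambda_\ell$ keeps us in $\hG$ by virtue of the representation \eqref{lambar} of $1/\lambda_\ell$ as a one-sided (spectrally negative) exponential integral.

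The step that requires the most care — and the one the authors flag as "some integrability issues" — is making sure every object is genuinely in the right $\L_2$ space over the unbounded domain $\R$: that $\gamma_f\in\L_2(\R,\Lambda)$ (which is exactly what \eqref{Kolm_c} buys us via the half-plane Hardy-space factorization), that $Q=\gamma_f/\overline{\lambda_\ell}$ and $1/\overline{\lambda_\ell}$ are square integrable with respect to the appropriate measures, that the passage from $\lambda_\ell\hg\in\L_2(\R,\mu)$ to $\lambda_\ell\hg\in\hG$ via \eqref{lam_hg} is legitimate, and that dividing by $\gamma_f$ (which may vanish or be small on a null set) is harmless because it is done inside the closed-span characterization \eqref{outer_c} rather than pointwise. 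Once these integrability points are checked, the argument is word-for-word the discrete-time proof. I therefore expect the write-up to be short, explicitly remarking where the half-plane outer-function theory and condition \eqref{Kolm_c} replace the circle versions used before.
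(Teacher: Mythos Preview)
Your proposal is correct and follows essentially the same approach as the paper's own proof, which indeed reproduces the discrete-time argument step by step in the continuous-time setting. The paper's write-up is exactly what you anticipate: it rewrites $\errap$ via the factorizations, uses \eqref{lam_hg} and the outer-function characterization \eqref{outer_c} to place $\lambda_\ell\gamma_f\hg$ in $L^2_{\le 0}$, decomposes $Q=\gamma_f/\overline{\lambda_\ell}$, obtains the lower bound and the equality at $\hg_*$, and then checks $\hg_*\in\hG$ via \eqref{outer_c} and \eqref{lambar}.
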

\begin{proof}
We have to compute
\[
   \errap =  \min_{\hg\in\hG}  \int_\R  \left| \hg(u)-\frac{1}{ |\ell(iu)|^2 +1}\right|^2
   \left( |\ell(iu)|^2 +1  \right)  \mu(du),
\]
where
\[
  \hG:= \overline{\rm span}\{ e^{i\tau u},\ \tau\le 0\ \big|  \L_2(\R,\mu) \}.
\]
By using factorizations, we have
\begin{align}
  \errap
  &=
  \min_{\hg \in \hG}  \int_\R  \left| \lambda_\ell(u)\, \hg(u) -  \frac{1}{\overline{\lambda_\ell(u)}} \right|^2  \mu(du) \label{errap_cL}\\
  &=
   \min_{\hg \in \hG}  \int_\R
  \left| ( \lambda_\ell \gamma_f \hg)(u)-\frac{\gamma_f(u)}{ \overline{\lambda_\ell(u) }} \right|^2 \ du.  \notag
\end{align}

Consider arbitrary $\hg\in\hG$. Without loss of generality we may assume that
$\lambda_\ell\, \hg\in \L_2(\R,\mu)$ (otherwise the integral in \eqref{errap_cL} is infinite).
Then by \eqref{lam_hg} we have $\lambda_\ell \, \hg\in \hG$, which is equivalent to
\[
  \lambda_\ell \gamma_f \hg
  \in \overline{\rm span}\{ \gamma_f e^{i\tau u},\ \tau\le 0 \big| \L_2(\R,\Lambda) \}
  = L^2_{\le 0},
\]
where the latter equality holds by \eqref{outer_c} because $\gamma_f$ is an outer function.

On the other hand, since  $\gamma_f\in L^2(\R,\Lambda)$ and
$|\lambda_\ell|\ge 1$, we have
\[
   Q:=\frac{\gamma_f}{ \overline{\lambda_\ell }} \in \L_2(\R,\Lambda).
\]
Consider the unique orthogonal decomposition in $\L_2(\R,\Lambda)$
\[
   Q:=\frac{\gamma_f}{ \overline{\lambda_\ell }} = Q_{\le 0}+  Q_{>0}
\]
with $Q_{\le 0}\in L^2_{\le 0}$ and $Q_{> 0}\in L^2_{>0}$.
Due to the orthogonality of the spaces $L^2_{\le 0}$ and $L^2_{>0}$,
we clearly have
\[
   \errap \ge \|Q_{>0}\|_2^2.
\]
Furthermore, the equality
\be \label{errQ_cont}
   \errap = \|Q_{>0}\|_2^2
\ee
is attained whenever
\be \label{gstar_c}
  \hg=\hg_* := \frac{Q_{\le 0}}{ \lambda_\ell \gamma_f}
  = \frac{Q- Q_{>0}}{ \lambda_\ell \gamma_f}
  = \frac 1{|\lambda_\ell|^{2}} -\frac{Q_{>0}}{ \lambda_\ell \gamma_f}.
\ee
It remains to prove that $\hg_*\in \hG$.
Since $\gamma_f$ is an outer function, we have by \eqref{outer_c}
\[
  Q_{\le 0} \in L^2_{\le 0}
  =\overline{\rm span}\{ \gamma_f e^{i\tau u},\ \tau\le 0 \big|   \L_2(\R,\Lambda) \},
\]
which is equivalent to
\[
  \frac{Q_{\le 0}} {\gamma_f}
  \in \overline{\rm span}\{  e^{i\tau u},\ \tau\le 0 \big|   \L_2(\R,\mu) \} = \hG.
\]
Finally, we obtain from \eqref{lambar} the required inclusion
\[
  \hg_* =  \frac{Q_{\le 0}} {\lambda_\ell \gamma_f} \in \hG,
\]
thus completing the proof.
\end{proof}
\medskip

For continuous-time kinetic energy $\ell (z) = \al z$, by taking the Fourier integral representation
\be \label{gammaj_c}
  \gamma_f(u) := \int_{0}^\infty \hgamma(\tau) e^{-i \tau u} d\tau,
  \qquad u\in \R,
\ee
and multiplying \eqref{lambinv_kc} and \eqref{gammaj_c}, we obtain
\begin{eqnarray*}
   Q(u) &=& \int_{0}^\infty \hgamma(\tau_1) e^{-i \tau_1 u} d\tau_1
      \cdot \frac{1}{\al}  \int_{0}^\infty  e^{-\tau_2/\al} e^{i \tau_2 u} d\tau_2
  \\
   &=&   \frac{1}{\al} \int_{0}^\infty \int_{0}^\infty
          \hgamma(\tau_1)  e^{-\tau_2/\al}   e^{i (\tau_2-\tau_1) u}   d\tau_1 d\tau_2
  \\
   &=&   \frac{1}{\al} \int_{-\infty}^\infty
         \left( \int_{\max(0,-\tau)}^\infty
         \hgamma(\tau_1)  e^{-(\tau+\tau_1)/\al}  d\tau_1 \right)   e^{i \tau u} d\tau.
\end{eqnarray*}
Hence,
\begin{eqnarray} \nonumber
   Q_{>0} (u) &=&  \frac{1}{\al} \int_{0}^\infty
         \left( \int_{0}^\infty
         \hgamma(\tau_1)  e^{-(\tau+\tau_1)/\al}  d\tau_1 \right)   e^{i \tau u} d\tau
\\
    &=& \nonumber
  \frac{1}{\al} \int_{0}^\infty  \hgamma(\tau_1)  e^{-\tau_1/\al}  d\tau_1
  \cdot  \int_{0}^\infty  e^{-\tau/\al} e^{i \tau u}  d\tau
\\   \label{Q_kc2}
  &:=&   \KK  \int_{0}^\infty  e^{-\tau/\al} e^{i \tau u}  d\tau
  =  \frac{\al \KK}{1-i\al u}
\end{eqnarray}
with
\be \label{Kf_c}
    \KK := \frac{1}{\al} \int_{0}^\infty  \widehat\gamma(\tau)  e^{-\tau/\al}  d\tau .
\ee
By  \eqref{errQ_cont} and \eqref{Q_kc2}, it follows that
\be  \label{err_kc}
  \errap = \|Q_{>0}\|_2^2 =  \al^2 |\KK|^2 \int_{\R} \frac{du}{1+\al^2u^2}
  =  \pi\, \al \, |\KK|^2.
\ee
Furthermore, by using \eqref{gstar_c} and \eqref{Q_kc2}, we obtain the continuous-time
analogue of \eqref{gstar_kd},
\be \label{gstar_kc}
    \hg_*(u) =
    \frac 1 {|\lambda_\ell(u)|^{2}} \left( 1 -\frac{\al \KK} {\gamma_f(u)}\right)
    =\frac 1 {1+\al^2 u^2} \left( 1 -\frac{\al \KK} {\gamma_f(u)}\right),
    \quad u\in\R.
\ee

To derive an explicit formula for the outer function $\gamma_f$ and the constant $\KK$ in terms of the spectral density $f$, consider the function
$$
q(z) := \exp\left\{\frac 1 {\pi i} \int_\R \frac{uz + 1}{u - z} \, \frac{\ln \sqrt{f(u)}}{u^2+1} \, du\right\}, \quad \Im z>0.
$$
It is known \cite[p.\ 37]{DMK} that $q(z)$ belongs to the Hardy space $H^2$ on the upper half-plane and the boundary limits of the absolute value $|q|$ are Lebesgue-a.e.\ given by
$$
\lim_{v\downarrow 0} |q(u+vi)| = \sqrt{f(u)}, \quad u \in\R.
$$
Defining
$$
\gamma_f(u) = \lim_{v\downarrow 0} \overline{q(u+vi)}, \quad u\in\R,
$$
we evidently have $|\gamma_f(u)|^2 = f(u)$ for $u\in\R$. Further, $\gamma_f$ satisfies \eqref{outer_c} by \cite[p.\ 37, p.\ 39]{DMK}. The Fourier representation of $\overline{\gamma_f}$, see \eqref{gammaj_c},  continues to hold in the upper half-plane:
$$
q(z) = \int_{0}^\infty \overline{\hgamma(\tau)} e^{ i \tau z} d\tau, \quad \Im z > 0.
$$
It follows from \eqref{Kf_c} that
$$
\KK = \frac 1 \al\overline{q(i/\alpha)} = \frac 1\al \exp\left\{\frac 1 {2\pi} \int_\R \frac{\alpha+ ui}{1+ ui\alpha} \, \frac{\ln f(u)}{u^2+1} \, du\right\}.
$$
Recalling \eqref{err_kc},  we arrive at the following
\begin{thm}
In the continuous-time case with $\ell(z) = \alpha z$, the additional adaptivity error is given by
\[
\errap = \frac{\pi}{\alpha} \exp\left\{ \frac \alpha {\pi} \int_{\R} \frac{ \ln f(u)}{1+\alpha^2 u^2} du \right\}.
\]
\end{thm}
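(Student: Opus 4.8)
The plan is to specialize the general continuous-time adaptive error formula $\errap = \|Q_{>0}\|_2^2$ to the kinetic energy $\ell(z)=\alpha z$ and then make the dependence on $f$ explicit, mirroring the discrete-time derivation of \eqref{ERRAP_discr}. The first reduction has in fact already been carried out in the preceding text: with the factorization $\lambda_\ell(u)=1+i\alpha u$, the representation \eqref{lambinv_kc}, and the Fourier representation \eqref{gammaj_c} of the outer function $\gamma_f$, one computes $Q_{>0}$ as in \eqref{Q_kc2}, and \eqref{err_kc} gives $\errap=\pi\alpha|\KK|^2$ with $\KK$ as in \eqref{Kf_c}. So the whole task reduces to evaluating $|\KK|$.

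For this I would invoke the explicit outer function $q$ on the upper half-plane introduced just before the statement: $|\gamma_f|^2=f$ with $\gamma_f$ the boundary value of $\overline q$, and, crucially, the Laplace-type representation $q(z)=\int_0^\infty \overline{\widehat\gamma(\tau)}\, e^{i\tau z}\, d\tau$ for $\Im z>0$, which lifts \eqref{gammaj_c} into the half-plane (this is the content of \cite[p.\ 37, p.\ 39]{DMK}). Evaluating at the interior point $z=i/\alpha$ gives $e^{i\tau z}=e^{-\tau/\alpha}$, hence $\overline{q(i/\alpha)}=\int_0^\infty \widehat\gamma(\tau)\, e^{-\tau/\alpha}\, d\tau=\alpha\KK$ by \eqref{Kf_c}; thus $\KK=\alpha^{-1}\overline{q(i/\alpha)}$ and $|\KK|^2=\alpha^{-2}|q(i/\alpha)|^2$. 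Equivalently, one reads off the closed form $\KK=\alpha^{-1}\exp\{\frac{1}{2\pi}\int_\R \frac{\alpha+ui}{1+ui\alpha}\,\frac{\ln f(u)}{u^2+1}\, du\}$ recorded above.

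To finish, I would compute $|\KK|^2=\KK\overline\KK=\alpha^{-2}\exp\{\frac1\pi\int_\R \Re(\frac{\alpha+ui}{1+ui\alpha})\,\frac{\ln f(u)}{u^2+1}\, du\}$ and simplify the kernel: a one-line algebraic computation gives $\Re(\frac{\alpha+ui}{1+ui\alpha})=\frac{\alpha(1+u^2)}{1+\alpha^2 u^2}$, so the factor $1+u^2$ cancels and the exponent collapses to $\frac\alpha\pi\int_\R \frac{\ln f(u)}{1+\alpha^2 u^2}\, du$. Substituting into $\errap=\pi\alpha|\KK|^2$ yields the asserted identity.

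Throughout, the continuous-time Kolmogorov condition \eqref{Kolm_c} is what guarantees that $\gamma_f$ exists and that $\ln f$ is integrable against the relevant kernels. The one genuinely delicate step is the passage $\KK=\alpha^{-1}\overline{q(i/\alpha)}$, which requires care with the complex conjugations and with interchanging the Laplace representation of $\gamma_f$ with the Hardy-space structure of $q$; everything after that is routine bookkeeping. One could add, as in the discrete case around \eqref{Kolm_anti}, that when \eqref{Kolm_c} fails the right-hand side should be read as $0$, consistently with perfect predictability, but this is not needed for the statement as given.
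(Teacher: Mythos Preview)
Your proposal is correct and follows essentially the same route as the paper: both reduce to $\errap=\pi\alpha|\KK|^2$ via \eqref{err_kc}, identify $\KK=\alpha^{-1}\overline{q(i/\alpha)}$ through the Hardy-space extension of \eqref{gammaj_c}, and then simplify. You even spell out the computation of $\Re\big(\tfrac{\alpha+ui}{1+ui\alpha}\big)=\tfrac{\alpha(1+u^2)}{1+\alpha^2 u^2}$ that the paper leaves as ``straightforward transformations.''
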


As in the discrete-time case, if the Kolmogorov condition \eqref{Kolm_c} is violated (or if the spectral measure $\mu$ does not possess a density at all), the perfect prediction of the future is possible and we therefore have $\errap = 0$.

\medskip

\section{Examples of adaptive least energy approximations}
Unless the opposite is stated explicitly, in the following examples we consider kinetic energy,
i.e.\ we let $\ell(z)=\al z$ for continuous time and $\ell(z)=\al(z-1)$ for discrete time. Here
$\al>0$ is a fixed scaling parameter.

\subsection{Discrete time}

\subsubsection*{Autoregressive sequence}

A sequence of complex random variables $(B(t))_{t\in\Z}$ is called autoregressive,
if it satisfies the equation $B(t)=\rho B(t-1) + \xi(t)$, where $|\rho|<1$ and
$(\xi(t))_{t\in\Z}$
is a sequence of centered non-correlated complex random variables with
$\sigma^2:=\E |\xi(t)|^2$ not depending on $t$.
In this case we have a representation
\[
   B(t)= \sum_{j=0}^\infty \rho^j \xi(t-j), \qquad t\in \Z.
\]
For uncorrelated sequence we have a spectral representation
\be\label{spec_white_noise}
\xi(t) = \int_\T e^{i t u} \W(du)
\ee
where $\W$ is a complex centered random measure with uncorrelated values
on $\T$ controlled by the normalized Lebesgue measure
$\mu(du):= \tfrac{\sigma^2 du}{2\pi}$. Therefore, we obtain
\[
   B(t)=     \int_\T \sum_{j=0}^\infty \rho^j e^{i(t-j)u} \W(du)
        =     \int_\T  \frac{1}{ 1- \rho\, e^{-i u}} \ e^{i t u} \W(du).
\]
We see that the spectral measure  for $B$ is
\be \label{mu_ar}
    \mu(du):= \frac {\sigma^2 du}{2\pi|1- \rho\, e^{-i u}|^2},
\ee
which can be also found in Example 4.4.2 of \cite{BD}.
By \eqref{mu_ar} and  \eqref{errna_kin_discr}, the error of non-adaptive approximation
equals to
\be \label{errna_ar}
  \errna= \frac{\sigma^2}{1-|\rho|^2} \left( 1- \frac{1}{\sqrt{1+4\al^2}}\
	\frac{\b^2-|\rho|^2}{|\b-\rho|^2}  \right)
\ee
with $\b=\b(\al)$ defined in \eqref{beta}  (see \cite{IKL} for detailed calculation).

On the other hand, the spectral density factorizes as
\[
  f(u) = \frac{\sigma^2}{2\pi} |1-\rho e^{-iu}|^{-2} = \gamma_f(u) \overline{\gamma_f(u)}
\]
with
\be \label{gam_ar}
  \gamma_f(u) := \frac{\sigma}{\sqrt{2\pi}} (1-\rho e^{-iu})^{-1}
  =  \frac{\sigma}{\sqrt{2\pi}} \sum_{j=0}^\infty \rho^j e^{-iju}.
\ee
Hence, by  \eqref{Kf}
\be \label{Kf_ar}
   \KK = \frac{\sigma}{\sqrt{2\pi}}  \sum_{j=0}^\infty \rho^j \b^{-j}
   =  \frac{\sigma}{\sqrt{2\pi}(1-\rho/\b)},
\ee
whereas by  \eqref{err_kd}
\[ 
  \errap=  \frac{2\pi}{\b^2\sqrt{1+4\al^2}} \,  \frac{\sigma^2}{2\pi(1-\rho/\b)^2}
  =   \frac{\sigma^2}{\sqrt{1+4\al^2}\, |\b-\rho|^2 }.
\] 
Using  \eqref{errna_ar} we conclude that
\begin{eqnarray*}
   \erra&=& \errna+\errap
\\
   &=& \frac{\sigma^2}{1-|\rho|^2} \left( 1- \frac{1}{\sqrt{1+4\al^2}}\
	     \frac{\b^2-|\rho|^2}{|\b-\rho|^2} \right)
   +  \frac{\sigma^2}{\sqrt{1+4\al^2} \, |\b-\rho|^2 }\ .
\end{eqnarray*}
From  \eqref{gstar_kd}, \eqref{gam_ar}, and \eqref{Kf_ar}, for the optimal prediction we have
\begin{eqnarray} \nonumber
  \hg_*(u) &=& |\lambda_\ell|^{-2} \left( 1 -  \frac{\sigma}{\sqrt{2\pi}(1-\rho/\b)}
   \cdot \frac{ e^{iu} \sqrt{2\pi}(1-\rho e^{-iu})} { \b \sigma }\right)
\\ \nonumber
   &=& |\lambda_\ell|^{-2} \left( 1 -  \frac{e^{iu}-\rho}{\b-\rho} \right)
   = |\lambda_\ell|^{-2}   \frac{\b - e^{iu}}{\b-\rho}
\\ \nonumber 
   &=&   \frac{\b}{\al^2 (\b-\rho)(\b  - e^{-iu})}
   =   \frac{1}{\al^2(\b-\rho)} \sum_{j=0}^\infty \b^{-j} e^{-iju}.
\end{eqnarray}

\subsubsection*{Uncorrelated sequence}
Consider an uncorrelated sequence as a special case of the autoregressive
one with $\rho=0$. The best adaptive approximation is given by
\[
   \hg_*(u) =  \frac{1}{\al^2} \sum_{j=0}^\infty \b^{-j-1} e^{-iju}
\]
and the approximation errors are
\begin{align*}
  \errap&= \frac{\sigma^2}{\b^2\sqrt{1+4\al^2}},\\
  \erra&= \sigma^2 \left( 1- \frac{1}{\sqrt{1+4\al^2}}  \right)
   +  \frac{\sigma^2}{\b^2\sqrt{1+4\al^2}}
 =
   \sigma^2 \left( 1- \frac{1}{\al^2\b}  \right).
\end{align*}
Here we used again the identity \eqref{id_bmb} in the last step.

\subsubsection*{Simplest moving average sequence}

We call a sequence of complex random variables  $(B(t))_{t\in\Z}$
a simplest moving average sequence if it admits a representation
$B(t)= \xi(t)+\rho\, \xi(t-1)$, where $(\xi(t))_{t\in\Z}$ is a sequence of centered
non-correlated complex random variables with  $\sigma^2:=\E |\xi(t)|^2$
not depending on $t$.

Using \eqref{spec_white_noise}, we obtain
\[
   B(t)=  \int_\T  \left(1+\rho e^{-iu} \right) e^{itu} \W(du),
   \qquad t\in \Z.
\]
We conclude that the spectral measure for $B$ is
\be\label{mu_ma1}
   \mu(du):= \frac {\sigma^2 |1+ \rho\, e^{-iu}|^2 du}{2\pi};
\ee
see  Example 4.4.1 in~\cite{BD}.
By  \eqref{mu_ma1} and \eqref{errna_kin_discr},
the error of non-adaptive approximation equals to
\be \label{errna_ma}
  \errna= \sigma^2 \left( 1+|\rho|^2 - \frac{1}{\sqrt{1+4\al^2}}
	\left(  1+|\rho|^2 + \frac{\rho+\overline{\rho}}{\b}  \right)\right)
\ee
with $\b=\b(\al)$ defined in \eqref{beta} (see \cite{IKL} for detailed
calculation).

The form of factorization of spectral density depends on $|\rho|$.
If  $|\rho|<1$, then we have the factorization
\[
  f(u) = \frac{\sigma^2}{2\pi}\,  \big|1+\rho e^{-iu}\big|^{2}
  = \gamma_f(u) \, \overline{\gamma_f(u)}
\]
with
\[
  \gamma_f(u) := \frac{\sigma}{\sqrt{2\pi}} (1+\rho e^{-iu}).
\]
Hence, by \eqref{Kf}
\[
   \KK = \frac{\sigma}{\sqrt{2\pi}}  \left( 1+ \frac{\rho}{\b} \right),
\]
whereas by  \eqref{err_kd}
\[
  \errap =  \frac{2\pi}{\b^2\sqrt{1+4\al^2}} \,
	\frac{\sigma^2 \left| 1+ \frac{\rho}{\b} \right|^2}{2\pi}
  =   \frac{\sigma^2 \left|1+ \frac{\rho}{\b}\right|^2 }{\b^2 \sqrt{1+4\al^2} }.
\]
Using  \eqref{errna_ma} we arrive at
\begin{eqnarray*}
  && \erra = \errna+\errap
\\
   &=&    \sigma^2 \left( 1+|\rho|^2 - \frac{1}{\sqrt{1+4\al^2}}
	       \left(  1+|\rho|^2 + \frac{\rho+\overline{\rho}}{\b}  \right)\right)
   +   \frac{\sigma^2 \left|1+ \frac{\rho}{\b}\right|^2 }{\b^2 \sqrt{1+4\al^2} }
\\
    &=&    \sigma^2 \left(  1+|\rho|^2- \frac{1}{\b\al^2} - \frac{\rho+\overline{\rho}}{\b^2\al^2}
      -\frac{|\rho|^2(2\al^2+1)}{\b^2 \al^4}  \right).
\end{eqnarray*}
In our setting,
\begin{eqnarray*}
   \frac{\gamma_f}{\overline{\lambda_\ell}} (u)
   &=& \frac{\sigma}{\sqrt{2\pi}} (1+\rho e^{-iu}) \frac{\sqrt{\b}}{\al} \ \frac{1}{e^{iu}-\b}
\\
   &=& \frac{-\sigma}{\sqrt{2\pi}\al\sqrt{\b}} (1+\rho e^{-iu}) \frac{1}{1-e^{iu}/\b}
\\
   &=& \frac{-\sigma}{\sqrt{2\pi}\al\sqrt{\b}} (1+\rho e^{-iu}) \sum_{j=0}^\infty \b^{-j}e^{i j u}.
\end{eqnarray*}
It follows that
\[
  Q_{\le 0} (u) = \frac{-\sigma}{\sqrt{2\pi}\al\sqrt{\b}}
  \left( \rho e^{-i u}+ 1+\frac{\rho}{\b} \right).
\]
From  \eqref{gstar_kd} we find the optimal prediction
\[
  \hg_* (u)= \frac{Q_{\le 0}}{ \lambda_\ell \gamma_f}
  =    \frac{-(\rho e^{-i u}+ 1+\frac{\rho}{\b})}{\al^2 (1+\rho e^{-iu}) (e^{-iu}-\b) }\, .
\]
If $|\rho|<1$, $\rho\not=-1/\b$ we may expand this expression as
\begin{eqnarray*}
  && \hg_*(u) = \frac{1}{\al^2(\rho\b+1)}\left[ \frac{\rho^2}{\b(1+\rho e^{-iu})} -
\frac{1+\tfrac{\rho}{\b}+\rho\b}{e^{-iu}-\b}
  \right]
\\
  &=& \frac{1}{\al^2\b(\rho\b+1)}\left[ \frac{\rho^2}{1+\rho e^{-iu}} +
\frac{1+\tfrac{\rho}{\b}+\rho\b}{1-e^{-iu}/\b}
  \right]
\\
&=& \frac{1}{\al^2\b(\rho\b+1)} \sum_{j=0}^\infty
     \left[ (-1)^j\rho^{j+2}+\b^{-j}+\rho\b^{-j-1}+\rho \b^{-j+1}
     \right] e^{-i j u}.
\end{eqnarray*}

Notice that when letting $\rho=0$ we are back to the results for uncorrelated variables.

In the case $|\rho|>1$, we have
\begin{eqnarray*}
  \gamma_f(u) &:=& \frac{\sigma}{\sqrt{2\pi}} (\overline{\rho}+ e^{-iu}),
\\
   \KK &=& \frac{\sigma}{\sqrt{2\pi}}  \left( \overline{\rho}+ \frac{1}{\beta}\right),
\\
  \errap &=&   \frac{\sigma^2 \left|\rho+ \frac{1}{\beta}\right|^2 }{\b^2 \sqrt{1+4\al^2}}\, ,
\\
  \erra &=&   \sigma^2 \left(  1+|\rho|^2- \frac{2\al^2+1}{\b^2 \al^4}  - \frac{\rho+\overline{\rho}}{\b^2\al^2}
	      -\frac{|\rho|^2}{\b\al^2} \right).
\end{eqnarray*}
Furthermore, we have
\begin{eqnarray*}
    \hg_* (u) &=&  \frac{\overline{\rho}+\frac{1}{\b}+ e^{-iu}}{\al^2 (\b-e^{-iu}) (\overline{\rho} + e^{-iu})}
		\\
		&=& \left(1+ \frac{1}{\b(\overline{\rho}+\b)}\right) \frac{1}{\al^2 (\b-e^{-iu})}
		   +   \frac{1}{\b(\overline{\rho}+\b)}\  \frac{1}{\al^2 (\overline{\rho} + e^{-iu})}
\end{eqnarray*}
(the latter formula being valid if $\rho\not=-\beta$). Finally, we obtain an expansion
\[
    \hg_* (u) =  \left(1+ \frac{1}{\b(\overline{\rho}+\b)}\right)\, \frac{1}{\al^2\b}
		              \sum_{j=0}^\infty \frac{e^{-iju}}{\b^j}
									+  \frac{1}{(\overline{\rho}+\b)\al^2\b\overline{\rho}}
		                \sum_{j=0}^\infty \frac{e^{-iju}}{(-\overline{\rho})^j}.	
\]

\subsection{Continuous time}

\subsubsection*{Ornstein--Uhlenbeck process}

The Ornstein--Uhlenbeck process is a centered Gaussian stationary process with covariance $K_B(t)=e^{-|t|/2}$ and
the spectral measure
\be \label{mu_OU}
    \mu(du):= \frac {2 du}{\pi(4u^2+1)}.
\ee

By \eqref{mu_OU} and \eqref{errna_kin}, the error
of non-adaptive approximation is easy to calculate as
$\errna = \frac{\al}{2+\al}$.

The spectral density factorizes as
\[
  f(u) = \frac{2}{\pi(4u^2+1)} = \gamma_f(u) \overline{\gamma_f(u)}
\]
with
\[
  \gamma_f(u) := \sqrt{\frac{2}{\pi}} \frac{1}{1+2iu}
  =  \int_{0}^\infty    \frac{1}{\sqrt{2\pi}} \  e^{-\tau/2} e^{-i \tau u} d\tau.
\]
Hence, by  \eqref{Kf_c}
\[
   \KK =  \frac{1}{\al} \cdot \frac{1}{\sqrt{2\pi}}  \int_{0}^\infty   e^{-\tau/2} e^{-\tau/\al}  d\tau
   = \frac{\sqrt{2}}{\sqrt{\pi}(2+\al)} ,
\]
whereas by  \eqref{err_kc}
\[
  \errap=     \pi\, \al \, \frac{2}{\pi(2+\al)^2} =    \frac{2\al}{(2+\al)^2} ,
\]
and  we have
\[
   \erra= \errna+\errap = \frac{\al}{2+\al} + \frac{2\al}{(2+\al)^2} =  \frac{4\al+\al^2}{(2+\al)^2} \ .
\]
For the optimal adaptive approximation, we easily obtain from \eqref{gstar_kc}
\[
   \hg(u)= \frac{2}{2+\al}\, \frac{1}{1+i \al u}\,,
\]
hence, the optimal weight is
\[
  g(\tau)= \frac{2}{(2+\al)\al}\, e^{\tau/\al}\, \ed{\tau\le 0}.
\]
Summarizing, we arrive at the following
\begin{thm}
Let $\ell(z)= \alpha z$. The optimal adaptive approximation of the Ornstein--Uhlenbeck process with covariance function $K_B(t)=e^{-|t|/2}$ is given by
\[
    X(t)= \frac{2}{(2+\al)\al}\,  \int_{0}^\infty B(t-s)\, e^{-s/\al}\, ds,
\]
and the corresponding error is $\erra= (4\al+\al^2)/(2+\al)^2$.
\end{thm}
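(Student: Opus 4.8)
The plan is to specialize the general continuous-time adaptive approximation theorem proved above to the kinetic energy $\ell(z)=\al z$ and to the spectral data of the Ornstein--Uhlenbeck process; after that only a handful of elementary integrals remain, together with the passage from the spectral description \eqref{XW} of the optimal $X$ to its moving-average form \eqref{XgB}.

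First I would record the spectral data: by the Bochner--Khinchine theorem the covariance $K_B(t)=e^{-|t|/2}$ corresponds to the spectral measure $\mu(du)=\frac{2\,du}{\pi(4u^2+1)}$ from \eqref{mu_OU}, whose density $f(u)=\frac{2}{\pi(4u^2+1)}$ plainly satisfies the Kolmogorov condition \eqref{Kolm_c}. Then I would supply the two factorizations. On the energy side, $|\ell(iu)|^2+1=\al^2u^2+1=|\lambda_\ell(u)|^2$ with $\lambda_\ell(u)=1+i\al u$, which is the polynomial factorization \eqref{fac_lam3} with $M=1$, and $1/\lambda_\ell$ has the representation \eqref{lambinv_kc}. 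On the density side I would take $\gamma_f(u)=\sqrt{2/\pi}\,(1+2iu)^{-1}$: it clearly satisfies $|\gamma_f|^2=f$, and it is (the conjugate of) an outer function for the lower half-plane --- for instance because $\overline{\gamma_f}$ is the boundary value of the analytic, zero-free function $\sqrt{2/\pi}\,(1-2iz)^{-1}$ on the upper half-plane --- so the characterization \eqref{outer_c} holds. Its Fourier transform is $\gamma_f(u)=\int_0^\infty\frac{1}{\sqrt{2\pi}}e^{-\tau/2}e^{-i\tau u}\,d\tau$, i.e.\ $\hgamma(\tau)=\frac{1}{\sqrt{2\pi}}e^{-\tau/2}$ in \eqref{gammaj_c}.

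With both factorizations in hand, the general theorem and the formulas following it apply verbatim. The constant in \eqref{Kf_c} is the elementary integral
\[
  \KK=\frac1\al\int_0^\infty\frac{1}{\sqrt{2\pi}}\,e^{-\tau/2}\,e^{-\tau/\al}\,d\tau=\frac{\sqrt2}{\sqrt\pi\,(2+\al)},
\]
whence \eqref{err_kc} yields $\errap=\pi\al|\KK|^2=\frac{2\al}{(2+\al)^2}$. From \eqref{errna_kin}, using $\mu(\R)=K_B(0)=1$, we have $\errna=\int_\R\frac{\al^2u^2}{\al^2u^2+1}\,\mu(du)=1-\int_\R\frac{1}{\al^2u^2+1}\,\mu(du)$, and evaluating the remaining integral by partial fractions (or residues) gives $\errna=\frac{\al}{2+\al}$, so $\erra=\errna+\errap=\frac{4\al+\al^2}{(2+\al)^2}$. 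Finally, \eqref{gstar_kc} gives, after cancelling the common factor $1-i\al u$,
\[
  \hg_*(u)=\frac{1}{1+\al^2u^2}\Big(1-\frac{\al\KK}{\gamma_f(u)}\Big)=\frac{2}{2+\al}\cdot\frac{1}{1+i\al u}.
\]
Since $\frac{1}{1+i\al u}=\frac1\al\int_0^\infty e^{-s/\al}e^{-isu}\,ds$, the function $\hg_*$ is the Fourier transform of the causal exponential weight $g(\tau)=\frac{2}{(2+\al)\al}\,e^{\tau/\al}\,\ed{\tau\le0}$; inserting this $g$ into \eqref{XgB} produces exactly $X(t)=\frac{2}{(2+\al)\al}\int_0^\infty B(t-s)\,e^{-s/\al}\,ds$.

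No step here is genuinely hard --- it is mostly bookkeeping with elementary integrals. The one point worth a word of care is the verification that $\gamma_f$ is the \emph{true} outer factor of $f$ rather than merely some function with $|\gamma_f|^2=f$; this is what guarantees $\hg_*\in\hG$ and hence that the constructed $X$ is adaptive, and it is settled by the zero-freeness of $(1-2iz)^{-1}$ in the upper half-plane together with the outer-function characterization from \cite{DMK} already used in Section 3.
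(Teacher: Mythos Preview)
Your proposal is correct and follows essentially the same route as the paper: you specialize the general continuous-time theorem with the factorizations $\lambda_\ell(u)=1+i\al u$ and $\gamma_f(u)=\sqrt{2/\pi}\,(1+2iu)^{-1}$, compute $\KK$, $\errap$, $\errna$, and $\hg_*$ exactly as the paper does, and then pass to the moving-average form. If anything, you are slightly more explicit than the paper in justifying why $\gamma_f$ is genuinely outer and in checking the Kolmogorov condition.
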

The same results may be formally obtained by discretization of Orn\-stein--Uhlenbeck process which leads
to autoregressive sequence with parameters $\rho_\delta=e^{-\delta/2}, \al_\delta=\tfrac{\al}{\delta},
\sigma_\delta^2=1-\rho_\delta^2$ and letting $\delta\to 0$.
\medskip



\section*{Acknowledgments}

This research was supported by DFG--SPbSU grant 6.65.37.2017. The work of the second named
author was also supported by RFBR grant 16-01-00258.


\end{document}